\normalfont \usepackage[T1]{fontenc} \xyoption{all}
\newtheorem{theorem}{Theorem}[section]
\newtheorem{corollary}[theorem]{Corollary}
\newtheorem{lemma}[theorem]{Lemma}
\newtheorem{prop}[theorem]{Proposition}
\theoremstyle{definition} \newtheorem{remark}[theorem]{Remark}
\newtheorem{defn}[theorem]{Definition}
\newtheorem{example}[theorem]{Example}
\numberwithin{equation}{section}
\newcommand{\nc}{\newcommand}\nc{\br}{\overline} \nc{\FH}{\mathcal H}
\nc{\CC}{\mathbb C}\nc{\DD}{\mathcal D} \nc{\Cc}{\mathcal
  C}\nc{\JJ}{\mathbb J}\nc{\II}{\mathbb I}\nc{\I}{\mathcal{I}}
\nc{\PP}{\mathbb P} \nc{\KK}{\mathbb K} \nc{\RR}{\mathbb R}
\nc{\LL}{\mathcal L} \nc{\Ll}{\ell} \nc{\NN}{\mathbb N}
\nc{\ZZ}{\mathbb Z} \nc {\HH}{\mathbb H} \nc {\OO}{\mathcal{O}}
\nc{\lra}{\longrightarrow} \nc{\bdot}{\bullet} \nc{\w}{\omega}
\nc{\dd}{\mathcal{D}}\nc{\im}{\mathrm{Im}}\nc{\Nij}{\mathrm{Nij}}\nc{\Jac}{\mathrm{Jac}}
\nc{\sgn}{\mathrm{sgn}}
\newcommand{\Tr}{\mathrm{Tr}}
\newcommand{\Hom}{\mathrm{Hom}}
\newcommand{\delbar}{\overline{\partial}}
\nc{\MM}{\mathcal{M}}\nc{\Har}{\mathcal{H}}
\nc{\zed}{\mathcal{Z}} \nc{\bb}[1]{\mathbb{#1}}
\nc{\image}{\mathrm{Im}\ } 
 \nc{\ba}{\overline}
\nc{\del}{\partial} \nc{\AAA}{\mathcal{A}}
\nc{\de}{\delta}\nc{\debar}{\overline{\delta}}
\nc{\FF}{\mathcal{F}}\nc{\GG}{\mathcal{G}}\nc{\EE}{\mathcal{E}}
\nc{\Ja}{e^{\tfrac{\pi}{2}\JJ_1}} \nc{\Jb}{e^{\tfrac{\pi}{2}\JJ_2}}
\nc{\eps}{\epsilon}\nc{\id}{\mathrm{id}}\nc{\Dir}{\mathrm{Dir}}\nc{\SO}{\mathrm{SO}}
\nc{\IPS}[1]{#1}\nc{\veps}{\varepsilon}\nc{\Cour}[1]{[#1]}\nc{\Diff}{\mathrm{Diff}}
\nc{\ad}{\mathrm{ad}} \nc{\UU}{\mathcal{U}} \nc{\Aa}{\mathcal
  A}\nc{\Bb}{\mathcal{B}}\nc{\Pic}{\mathrm{Pic}}\nc{\type}{\mathrm{type}}
\nc{\TT}{\mathbb{T}}\nc{\T}{\mathcal{T}}
\nc{\cl}{\mathrm{cl}}\nc{\isom}{\cong}
\nc{\Z}{\mathcal{Z}}\nc{\Tot}{\mathrm{Tot}}\nc{\Dol}{\mathrm{Dol}}\nc{\hol}{\mathrm{hol}}
\nc{\E}{\mathbb{E}}\nc{\e}{\mathbf{e}} \nc{\Gg}{\mathfrak{g}}
\nc{\Aut}{\mathrm{Aut}}\nc{\Der}{\mathrm{Der}}\nc{\ol}{\overline}\nc{\til}{\widetilde}
  \newcommand{\gcss}{generalized complex
  structures} \newcommand{\gk}{generalized K\"ahler}
\newcommand{\gkm}{generalized K\"ahler manifold}
\newcommand{\gks}{generalized K\"ahler structure}
\newcommand{\nhood}{neighborhood} 
 \newcommand{\C}{\mathbb{C}}
\newcommand{\mc}[1]{\mathcal{#1}}
\newcommand{\cs}{\hspace{1pt}\#\hspace{1pt}}
\renewcommand{\tilde}[1]{\widetilde{#1}}
\title{\bf Blowing up generalized K\"ahler 4-manifolds} \author{Gil
  R. Cavalcanti\footnote{Utrecht University;
   \href{mailto:g.r.cavalcanti@uu.nl}{\nolinkurl{g.r.cavalcanti@uu.nl}}
   \newline\hspace*{2em}Supported by a Marie Curie grant from the European Research Council. 
  }\and Marco
  Gualtieri\footnote{University of Toronto; \href{mailto:mgualt@math.toronto.edu}{\nolinkurl{mgualt@math.toronto.edu}}
\newline\hspace*{2em}Supported by a NSERC Discovery grant and an Ontario ERA.}}
\date{} \usepackage{color} \definecolor{tocolor}{rgb}{.1,.1,.5}
\definecolor{urlcolor}{rgb}{.2,.2,.6}
\definecolor{linkcolor}{rgb}{.1,.1,.6}
\definecolor{citecolor}{rgb}{.6,.2,.1}
\begin{document}

\maketitle

\abstract{We show that the blow-up of a generalized K\"ahler
  4-manifold in a nondegenerate complex point admits a generalized
  K\"ahler metric.  As with the blow-up of complex surfaces, this
  metric may be chosen to coincide with the original outside a tubular
  neighbourhood of the exceptional divisor. To accomplish this, we
  develop a blow-up operation for bi-Hermitian manifolds.}

\tableofcontents
\pagebreak
\section{Introduction}\label{intro}
Let $(M,\JJ_+,\JJ_-)$ be a generalized K\"ahler 4-manifold such that
both generalized complex structures $\JJ_+,\JJ_-$ have even type,
meaning that they are equivalent to either a complex or symplectic
structure at every point.  In other words, their underlying real
Poisson structures $P_+, P_-$ have either rank $0$ (at complex points)
or $4$ (at symplectic points).  The structure $\JJ_\pm$ is equipped
with a canonical section $s_\pm$ of its anticanonical line bundle,
vanishing on the locus $D_\pm$ of complex points, where $P_\pm$ has
rank zero.  From~\cite{Gualtieri:2010fk}, it follows that the
symplectic leaves of $P_+$ and $P_-$ must be everywhere transverse, so
that $D_+, D_-$ are disjoint.
\begin{center}\begin{tikzpicture}[scale=.8]
    \draw (-3.5,0) .. controls (-3.5,2) and (-1.5,2.5) .. (0,2.5);
    \draw[xscale=-1] (-3.5,0) .. controls (-3.5,2) and (-1.5,2.5)
    .. (0,2.5); \draw[rotate=180] (-3.5,0) .. controls (-3.5,2) and
    (-1.5,2.5) .. (0,2.5); \draw[yscale=-1] (-3.5,0) .. controls
    (-3.5,2) and (-1.5,2.5) .. (0,2.5); \draw (-2,-1) .. controls
    (-1.9,-0.8) and (-1.8,-0.5) .. (-1.8,0) .. controls (-1.8,+0.5)
    and (-1.9,0.8) .. (-2,1); \filldraw [black] (-1.8,0) circle (1pt);
    \draw (-1.5,0) node {$p$}; \draw (-2,-1.3) node {$D_+$}; \draw
    (+2,-1) .. controls (+1.9,-0.8) and (+1.8,-0.5) .. (+1.8,0)
    .. controls (+1.8,+0.5) and (+1.9,0.8) .. (+2,1); \draw (2,-1.3)
    node {$D_-$};
  \end{tikzpicture}\end{center}
It was shown in \cite{Cavalcanti:2008uk} that in a neighbourhood of a
complex point $p\in D_+$ which is nondegenerate, in the sense of being
a nondegenerate zero of $s_+$, there are complex coordinates $(w,z)$
such that the generalized complex structure $\JJ_+$ is equivalent to
that defined by the differential form
\begin{equation}\label{standard}
  \rho_+ = w + dw\wedge dz.
\end{equation}
Note that $D_+ = w^{-1}(0)$, along which $\rho_+|_{D_+} = dw\wedge dz$
defines a complex structure, whereas for $w\neq 0$, we have $\rho_+ =
w\exp(B+i\omega)$, for $B+i\omega = d\log w \wedge dz$, defining a
symplectic form $\omega$ away from $D_+$, as required.

It was then shown~\cite[Theorem 3.3]{Cavalcanti:2008uk} that the
complex blow-up at $p$ using the coordinates $(z,w)$ inherits a
generalized complex structure.  We detail in Section~\ref{blowp} why
this structure is independent of the chosen coordinates.  Thus we
obtain a canonical blow-up $(\til M, \til\JJ_+)$ of $(M,\JJ_+)$ at
$p$, equipped with a generalized holomorphic map $\pi:\til M\lra M$
which is an isomorphism outside the exceptional divisor $E =
\pi^{-1}(p)$.  The complex locus $\til D_+$ of the blow-up is the
proper transform of $D_+$, and the exceptional divisor $E$ is a
2-sphere which intersects $\til D_+$ transversely at one point and is
Lagrangian with respect to $\omega$ elsewhere; this makes $E$ a
generalized complex brane~\cite{Cavalcanti:2008uk}.

\begin{center}\begin{tikzpicture}
    \draw (-2,-1) .. controls (-1.9,-0.8) and (-1.8,-0.5) .. (-1.8,0)
    .. controls (-1.8,+0.5) and (-1.9,0.8) .. (-2,1);
    \draw[densely dashed] (-3,-.05) .. controls (-2.7,.05) and
    (-2.3,.1) .. (-2,.1) .. controls (-1.7,.1) and (-1.3,0.05)
    .. (-1,-.05); \draw (-2.6,.3) node {$E$}; \draw (-2,-1.3) node
    {$\til D_+$};

    \draw[ decoration={markings, mark=at position 1 with
      {\arrow[scale=2]{>}}}, postaction={decorate}, shorten >=0.4pt]
    (0,0) -- (1.9,0); \draw (.9,.3) node {$\pi$};

    \draw (3,-1) .. controls (3.1,-0.8) and (3.2,-0.5) .. (3.2,0)
    .. controls (3.2,+0.5) and (3.1,0.8) .. (3,1); \filldraw [black]
    (3.2,0) circle (1pt); \draw (3.5,0) node {$p$}; \draw (3,-1.3)
    node {$D_+$};
  \end{tikzpicture}\end{center}
In Section~\ref{gkblowup}, we use the bi-Hermitian tools developed in
Section~\ref{biherm} to construct a \emph{degenerate} generalized
K\"ahler structure on the blow-up, in the sense that the metric
degenerates along the exceptional divisor $E$. Finally, in
Section~\ref{gkblowup2}, we use a deformation procedure detailed in
Section~\ref{flowsec} to obtain a positive-definite metric, defining a
generalized K\"ahler structure such that $\pi$ is an isomorphism away
from a tubular neighbourhood of the exceptional divisor $E$.  The
generalized complex structure $\JJ_-$ does not lift uniquely to the
blow-up, as there is no preferred choice of symplectic area for $E$;
this degree of freedom inherent in the generalized K\"ahler blow-up is
familiar from the usual K\"ahler blow-up operation.

\section{Generalized complex blow-up}\label{blowp}

Let $(w,z)$ be standard coordinates for $M=\CC^2$, and consider the
generalized complex structure $\JJ$ defined by the form $\rho_+$ given
in~\eqref{standard}.  This structure extends uniquely to a generalized
complex structure $\til\JJ$ the blow-up $\til M = [\CC^2:0]$ of the
plane in the origin, simply because the anticanonical section
$\sigma=w\del_w\wedge\del_z$ does.  That is, the line generated by
$\rho_+$ may be written
\[
\left<\rho_+\right>=e^\sigma\Omega^{2,0}(M),
\]
and in the two blow-up charts $(w_0,z_0)=(w/z,z)$ and $(w_1, z_1) =
(w, z/w)$, this pulls back to the line
$e^{\til\sigma}\Omega^{2,0}(\til M)$, where
\[
\til\sigma = w_0\del_{w_0}\wedge\del_{z_0} =
\del_{w_1}\wedge\del_{z_1}.
\]
Clearly, $\til\sigma$ drops rank along the proper transform of
$w^{-1}(0)$, namely $w_0^{-1}(0)$.

The above construction of $\til\JJ$ uses the complex structure defined
by $(w,z)$, but this complex structure is not determined canonically
by $\JJ$.  That is, there are automorphisms
$\Phi=(\varphi,B)\in\Diff(M)\ltimes\Omega^{2,\cl}(M,\RR)$ of $\JJ$ for
which $\varphi$ is not a holomorphic automorphism of $\CC^2$.  To show
that $\til\JJ$ is independent of the particular complex structure used
to perform the blow-up, we must show that any such automorphism
$\Phi\in\Aut(\JJ)$ with $\varphi(0)=0$ lifts to the blow-up
$[\CC^2:0]$.

\begin{theorem}\label{indep}
  Any automorphism of $\JJ$ on $M=\CC^2$ fixing the origin lifts to
  the blow-up $\til M$ of $M$ in the origin.
\end{theorem}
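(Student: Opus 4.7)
\emph{Approach.} The plan is to reduce the lifting question to pointwise data at the origin. Specifically, I will show that any automorphism $\Phi=(\varphi,B)$ of $\JJ$ fixing $0$ forces the derivative $d\varphi|_0$ to be $\CC$-linear with respect to the canonical complex structure $J_0$ on $T_0 M$ induced by $\JJ|_0$ (which is of complex type at $0\in D_+$). Once this is done, $\varphi$ lifts smoothly to the complex blow-up by the standard criterion, and $B$ lifts trivially as $\til B=\pi^*B$. The resulting pair $\til\Phi=(\til\varphi,\til B)$ preserves $\til\JJ$ on the dense open set $\til M\setminus E$, where $\pi$ intertwines the two structures, and hence on all of $\til M$ by continuity.

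\emph{Step 1 (main step): complex linearity at the origin.} At the complex point $0$, the endomorphism $\JJ|_0$ of $T_0 M\oplus T_0^*M$ is block diagonal $J_0\oplus(-J_0^*)$, with $J_0$ the standard complex structure coming from the coordinates $(w,z)$. Writing $A=d\varphi|_0$ and $\beta=B|_0$, the pointwise action of $\Phi$ is $(X,\xi)\mapsto(AX,(A^{-1})^*\xi+\iota_{AX}\beta)$. Imposing $\Phi|_0\circ\JJ|_0=\JJ|_0\circ\Phi|_0$ and comparing the $T$-components yields $AJ_0=J_0 A$, i.e., $A\in GL(2,\CC)$; the $T^*$-components impose a type restriction on $\beta$ that is not needed below. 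In particular, no $B$-field correction can compensate for a non-$\CC$-linear derivative, and this step is where the generalized complex hypothesis is really used.

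\emph{Step 2: lifting and conclusion.} Given $A\in GL(2,\CC)$, in the blow-up chart $(w_0,z_0)=(w/z,z)$ the candidate lift is
\[
\til\varphi(w_0,z_0)=\left(\frac{\varphi^1(w_0 z_0,z_0)}{\varphi^2(w_0 z_0,z_0)},\,\varphi^2(w_0 z_0,z_0)\right).
\]
Expanding $\varphi^i$ in Taylor series at $0$ and substituting $(w,z)=(w_0 z_0,z_0)$, the potential obstructions to smoothness along $z_0=0$ come from $\bar z_0$-type terms, which are controlled by the antiholomorphic derivatives $\delbar\varphi^i|_0$; these vanish precisely because $A$ is $\CC$-linear. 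An analogous verification in the second blow-up chart $(w_1,z_1)=(w,z/w)$ produces a global smooth diffeomorphism $\til\varphi$ with $\pi\circ\til\varphi=\varphi\circ\pi$, and setting $\til B=\pi^* B$ yields the desired lifted automorphism.
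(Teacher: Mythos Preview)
Your Step~2 contains a genuine gap: the lifting criterion you invoke is false. Complex linearity of $d\varphi|_0$ is \emph{not} sufficient for a smooth diffeomorphism of $\CC^2$ fixing the origin to lift to the complex blow-up. For example, $\varphi(w,z)=(w,\,z+\ol w^{\,2})$ has $d\varphi|_0=\id$, yet in the chart $(w_1,z_1)=(w,z/w)$ the candidate lift has second component $z_1+\ol w_1^{\,2}/w_1$, which is not smooth along $w_1=0$. The correct criterion, due to Malgrange and used in the paper, is that $\til w=\varphi^*w$ and $\til z=\varphi^*z$ lie in the ideal of $C^\infty(\CC^2,\CC)$ generated by $w$ and $z$; equivalently, \emph{all} pure antiholomorphic partial derivatives $\del^{p+q}\til w/\del^p\ol w\,\del^q\ol z$ and $\del^{p+q}\til z/\del^p\ol w\,\del^q\ol z$ must vanish at the origin, not merely the first-order ones. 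Your Taylor-expansion argument only controls the first-order antiholomorphic terms and says nothing about the higher ones.

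The paper's proof is therefore doing substantially more work than your Step~1. From the automorphism equation $e^B\varphi^*(w+dw\wedge dz)=e^\lambda(w+dw\wedge dz)$ one reads off $\til w=e^\lambda w$, which immediately places $\til w$ in the ideal. For $\til z$, comparing the $dw\wedge d\ol z$ and $dw\wedge d\ol w$ components of the degree-two part yields identities of the form
\[
\frac{\del\til z}{\del\ol z}=w\cdot(\text{smooth}),\qquad \frac{\del\til z}{\del\ol w}=w\cdot(\text{smooth}),
\]
valid in a full neighbourhood of the origin, not just at the point. Since $w$ is holomorphic, repeated differentiation in $\ol w,\ol z$ preserves the factor of $w$, so every pure antiholomorphic derivative of $\til z$ vanishes at $0$. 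This divisibility by $w$ is the key structural consequence of the automorphism hypothesis that your pointwise argument at $0$ cannot see.
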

\begin{proof}
  Let $\Phi=(\varphi,B)\in\Aut(\JJ)$, meaning that
  \begin{equation}\label{autex}
    e^B\varphi^*(w + dw\wedge dz)=e^\lambda(w+dw\wedge dz),
  \end{equation}
  for some $\lambda\in C^\infty(M,\CC)$.  Also, assume $\varphi(0)=0$.
  Let $p:\til M\to M$ be the blow-down map.  We will show that
  $\varphi$ lifts to $\til\varphi\in\Diff(\til M)$ such that $p\circ
  \til\varphi = \varphi\circ p$, and then $(\til \varphi,
  p^*B)\in\Aut(\til\JJ)$ is the required lift of the automorphism.
  The lift $\til\varphi$ exists if and only if the functions $\til w =
  \varphi^*w$, $\til z = \varphi^*z$ are in the ideal generated by $w$
  and $z$ in $C^\infty(M,\CC)$.  By a theorem of
  Malgrange~\cite{MR0212575}, this is equivalent to the following
  constraints: $\til w(0)=0$, $\til z(0)=0$, and
  \begin{equation}\label{lift}
    \left.\frac{\del^{p+q} \til w}{\del^p\ol w\ \del^q \ol z}\right|_{(0,0)} = 0\ \ \ \text{and}\ \ \ \left.\frac{\del^{p+q} \til z}{\del^p\ol w\ \del^q \ol z}\right|_{(0,0)} = 0,\ \text{for all }p,q\in\NN.
  \end{equation}
  To verify~\eqref{lift}, we rewrite~\eqref{autex} as follows:
  \begin{equation}\label{autexp}
    \til w + d\til w\wedge d\til z = e^\lambda e^{-B}(w + dw \wedge dz) = e^\lambda(w + dw\wedge dz - w B),
  \end{equation}
  where the summand of degree four is omitted from the last term since
  it vanishes.  From this we immediately conclude that $\til w =
  e^\lambda w$, so that $\til w$ satisfies~\eqref{lift}. But then
  \begin{align*}
    d\til w\wedge d\til z &= d(e^\lambda w)\wedge d\til z\\
    &=e^\lambda(dw + wd\lambda)\wedge (\tfrac{\del \til z}{\del \ol w}
    d\ol w + \tfrac{\del \til z}{\del w} dw + \tfrac{\del \til z}{\del
      \ol z} d\ol z + \tfrac{\del \til z}{\del z} d z).
  \end{align*}
  By~\eqref{autexp}, this coincides with $e^\lambda(dw\wedge dz -
  wB)$, and equating $dw\wedge d\ol z$ components we obtain
  \[
  (1 + w\tfrac{\del \lambda}{\del w})\tfrac{\del\til z}{\del\ol z} - w
  \tfrac{\del \lambda}{\del \ol z} \tfrac{\del \til z}{\del w} =
  -wB_{w\ol z}.
  \]
  Solving for $\tfrac{\del\til z}{\del\ol z}$ we obtain, near $(0,0)$,
  \begin{equation}\label{firstcon}
    \frac{\del \til z}{\del\ol z} = \frac{w (\tfrac{\del \lambda}{\del \ol z} \tfrac{\del \til z}{\del w} -B_{w\ol z})}{1 + w\tfrac{\del \lambda}{\del w}}.
  \end{equation}
  Similarly, equating $dw\wedge d\ol w$ components yields, near
  $(0,0)$,
  \begin{equation}\label{seccon}
    \frac{\del \til z}{\del\ol w} = \frac{w (\tfrac{\del \lambda}{\del \ol w} \tfrac{\del \til z}{\del w} -B_{w\ol w})}{1 + w\tfrac{\del \lambda}{\del w}}.
  \end{equation}
  Finally,~\eqref{firstcon},~\eqref{seccon} imply that~\eqref{lift}
  holds for $\til z$, as required.
\end{proof}

\section{Bi-Hermitian approach}\label{biherm}
Our main tool for describing the geometry of the blow-up will be the
bi-Hermitian approach to generalized K\"ahler
geometry~\cite{Gualtieri:2010fk}, which we describe briefly here.
Since we are interested in a neighbourhood of a point, we may assume
that the torsion 3-form $H$ of our generalized K\"ahler structure is
cohomologically trivial.  Such a generalized K\"ahler structure
determines and is determined by a Riemannian metric $g$, a 2-form $b$,
and a pair of complex structures $I_+, I_-$ which are compatible with
$g$ and satisfy the condition
\begin{equation}\label{gkcon}
  \pm d^c_\pm\omega_\pm= db,
\end{equation}
where $\omega_\pm = g I_\pm$ are the usual Hermitian 2-forms and
$d^c_\pm = [d,I_\pm^*]$ are the real Dolbeault operators associated to
$I_\pm$.  The correspondence between the generalized K\"ahler pair
$\JJ_+,\JJ_-$ and the above bi-Hermitian data is as follows:
\begin{equation}
  \label{reconstruct}
  \JJ_\pm=\tfrac{1}{2}
  \begin{pmatrix}
    1 &  \\
    -b & 1
  \end{pmatrix}
  \begin{pmatrix}I_+\pm I_- & -(\omega_+^{-1}\mp\omega_-^{-1}) \\
    \omega_+\mp\omega_-&-(I^*_+\pm I^*_-)\end{pmatrix}
  \begin{pmatrix}
    1 &  \\
    b & 1
  \end{pmatrix}.
\end{equation}
It was observed in~\cite{MR2217300} that the bi-Hermitian condition
endows the complex structure $I_\pm$ with a holomorphic Poisson
structure $\sigma_\pm$ with real part
\begin{equation}\label{cmmt}
  Q=\mathrm{Re}(\sigma_+)=\mathrm{Re}(\sigma_-) = \tfrac{1}{8}[I_+,I_-]g^{-1}.
\end{equation}
Indeed, $\sigma_\pm$ derives from a pair of transverse holomorphic
Dirac structures as described in~\cite{Gualtieri:2010fk}, though we
shall not make use of this here.

Any pair of complex structures satisfies the following identity for
the commutator:
\begin{equation} \label{ident} [I_+,I_-]=(I_+-I_-)(I_- + I_+).
\end{equation}
Therefore, the zeros of $Q$ coincide with the loci where $I_+ = I_-$
or $I_+ = -I_-$.  From~\eqref{reconstruct}, we see that the real
Poisson structures $P_\pm$ underlying $\JJ_\pm$ are given by
\begin{equation}\label{realpois}
  P_\pm = -\tfrac{1}{2}(\omega_+^{-1} \mp \omega_-^{-1}) = \tfrac{1}{2}(I_+\mp I_-)g^{-1}.
\end{equation}
Therefore, we conclude that the zero locus of $Q$, and hence
$\sigma_\pm$, is the union of the zero loci for $P_+,P_-$, namely the
subsets $D_+, D_-$ discussed in section~\ref{intro}.

The holomorphic Poisson structure $(I_\pm, \sigma_\pm)$ provides an
economical means to describe the full generalized K\"ahler structure,
as observed in~\cite{Gualtieri:2007fe}.
\begin{theorem}[\cite{Gualtieri:2007fe}, Theorem 6.2]\label{brGK}
  Let $(I_0,\sigma_0)$ be a holomorphic Poisson structure with
  $\mathrm{Re}(\sigma_0)=Q$.  Any closed 2-form $F$ satisfying the
  equation
  \begin{equation}\label{brane}
    FI_0 + I_0^*F + FQF = 0
  \end{equation}
  defines an integrable complex structure $I_1 = I_0+ QF$, a symmetric
  tensor $g = -\tfrac{1}{2}F(I_0+I_1)$, and a 2-form $b =
  -\tfrac{1}{2}F(-I_0 + I_1)$ such that
  \[
  d^c_{I_0}\omega_{I_0} = -d^c_{I_1}\omega_{I_1} = db.
  \]
  If $g$ is positive-definite, then $(g,I_0,I_1)$ defines a
  bi-Hermitian structure satisfying~\eqref{gkcon}, and hence a
  generalized K\"ahler structure, where $\JJ_-$ is the symplectic
  structure $F$.
\end{theorem}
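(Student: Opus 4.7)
My plan is to split the statement into (a) purely algebraic identities, (b) the integrability of $I_1$, and (c) the differential $d^c$-identity relating $\omega_{I_0}, \omega_{I_1}, b$.

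The algebraic part comes first. To see $I_1^2=-1$, I would expand
\[
I_1^2 \;=\; -1 + I_0 QF + QF I_0 + QFQF,
\]
substitute $FQF = -FI_0 - I_0^*F$ from (\ref{brane}), and observe that the remainder reduces to $(I_0 Q - QI_0^*)F$, which vanishes because $Q=\mathrm{Re}(\sigma_0)$ inherits the algebraic type condition $I_0 Q = QI_0^*$ from the $(2,0)$ type of $\sigma_0$. For symmetry of $g$, I would use (\ref{brane}) again to rewrite $g = -\tfrac{1}{2}F(I_0+I_1) = -\tfrac{1}{2}(FI_0 - I_0^*F)$, whose associated bilinear form $(X,Y)\mapsto F(I_0 X, Y) - F(X, I_0 Y)$ is manifestly symmetric. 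Antisymmetry of $b = -\tfrac{1}{2}FQF$ is immediate from antisymmetry of $F$ and $Q$.

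For integrability of $I_1$ I would argue via generalized complex geometry. The pair $(I_0,\sigma_0)$ determines a generalized complex structure $\JJ_0$ with $+i$-eigenbundle $L_0 \subset (TM\oplus T^*M)\otimes \CC$. The brane equation (\ref{brane}) is precisely the algebraic condition on a closed 2-form $F$ ensuring that the $B$-transform $e^F L_0$ is again a Dirac structure — and therefore a generalized complex structure — whose projection to $TM$ cuts out the $+i$-eigenspace of $I_1 = I_0 + QF$. Courant involutivity of $e^F L_0$ combines $dF = 0$, the Poisson identity $[Q,Q]=0$, holomorphicity of $\sigma_0$, integrability of $I_0$, and the brane equation; integrability of $I_1$ then follows by projection. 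The $d^c$-identity $d^c_{I_0}\omega_{I_0} = -d^c_{I_1}\omega_{I_1} = db$ can then be extracted by unpacking the reconstruction formula (\ref{reconstruct}): the pair consisting of $\JJ_0$ and the symplectic generalized complex structure determined by $F$ now forms a commuting pair of integrable generalized complex structures, and the $d^c$-identity is precisely the bi-Hermitian shadow of their simultaneous integrability.

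The main obstacle is verifying Courant involutivity of $e^F L_0$: this single step absorbs holomorphicity of $\sigma_0$, closedness of $F$, and the brane equation, and it is the place where the algebraic statement (\ref{brane}) acquires its geometric meaning. Once this is in hand, the remaining claims — that $(g,I_0,I_1)$ is bi-Hermitian satisfying (\ref{gkcon}) whenever $g$ is positive-definite, and that it therefore defines, via (\ref{reconstruct}), a generalized K\"ahler structure with $\JJ_-$ equal to the symplectic structure $F$ — are essentially bookkeeping.
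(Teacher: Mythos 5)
The paper does not actually prove this statement --- it is imported verbatim from \cite{Gualtieri:2007fe} --- so your argument can only be judged on its own merits. The algebraic part is fine: $I_1^2=-1$ does reduce via~\eqref{brane} to $(I_0Q-QI_0^*)F=0$, which holds because $\sigma_0\in\wedge^2T_{1,0}$ forces $I_0Q=QI_0^*$, and the rewritings $g=-\tfrac{1}{2}(FI_0-I_0^*F)$ and $b=-\tfrac{1}{2}FQF$ give symmetry and skewness as you say.

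The generalized-geometric part, however, puts the brane equation in the wrong place. For \emph{any} closed real 2-form $F$, the $B$-field transform $e^FL_0$ of the $+i$-eigenbundle of $\JJ_0$ is automatically a Courant-involutive Dirac structure with $e^FL_0\cap\overline{e^FL_0}=0$, hence an integrable generalized complex structure; no condition on $F$ is required, so ``Courant involutivity of $e^FL_0$'' cannot be the step that absorbs~\eqref{brane}. What~\eqref{brane} actually encodes is the vanishing of the lower-left block of
\begin{equation*}
e^F\JJ_0e^{-F}=\begin{pmatrix}-(I_0+QF) & Q\\ -(FI_0+I_0^*F+FQF) & I_0^*+FQ\end{pmatrix},
\end{equation*}
i.e.\ that the transformed structure is again of holomorphic Poisson type with underlying complex structure $I_1=I_0+QF$ (equivalently, that $e^F(TM)$ is a space-filling brane for $\JJ_0$ --- hence the name). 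Integrability of $I_1$ then follows and this piece is repairable. The genuine gap is the identity $d^c_{I_0}\omega_{I_0}=-d^c_{I_1}\omega_{I_1}=db$, which is the heart of the theorem. You propose to read it off from~\eqref{reconstruct} applied to ``the commuting pair $\JJ_0$, $\JJ_F$,'' but (a) $\JJ_0$ and the symplectic structure of $F$ do not commute --- the lower-left block of their commutator is $FI_0+I_0^*F=-FQF$, not zero, and the true generalized K\"ahler pair involves the $b$-field twist --- and (b) the correspondence between commuting pairs and bi-Hermitian data presupposes a positive-definite generalized metric, whereas the theorem asserts the $d^c$-identity for arbitrary, possibly degenerate $g$; indeed $F$ need not even be invertible, so $\JJ_F$ may not exist. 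Since the degenerate case is exactly what this paper uses (Corollary~\ref{degfir} and \S\ref{pos}, where $F_t$ is compactly supported), the identity must be established by a direct computation with $d^c_\pm\omega_\pm$ using integrability of $I_0$, holomorphicity of $\sigma_0$, $dF=0$ and~\eqref{brane}; that computation is the substance of the cited Theorem 6.2 and is missing from your proposal.
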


As is hinted at in Theorem~\ref{brGK}, in which $g$ need not be
positive-definite, it will be useful in studying the blowup for us to
relax the generalized K\"ahler condition, allowing degenerations of
the Riemannian metric while maintaining the remaining constraints.
\begin{defn}
  A \emph{degenerate bi-Hermitian} structure $(g,b,I_+, I_-)$ consists
  of a possibly degenerate tensor
  $g\in\Gamma^\infty(\mathrm{Sym}^2T^*)$, a 2-form $b\in\Omega^2$, and
  two integrable complex structures $I_+, I_-$, such that $gI_\pm +
  I_\pm^* g = 0$ and
  \begin{equation*}
    d^c_+ \omega_+ = -d^c_-\omega_-= db ,
  \end{equation*}
  where $\omega_\pm = gI_\pm$.  Informally, it is a generalized
  K\"ahler structure where $g$ may be degenerate.
\end{defn}

Degenerate bi-Hermitian structures arising from the construction in
Theorem~\ref{brGK} as solutions to~\eqref{brane} enjoy a composition
operation which we now review (see~\cite{Gualtieri:2007fe} for
details).

If $F_{01}$ is a closed 2-form solving
\begin{equation}\label{ij}
  F_{01}I_0 + I_0^* F_{01} + F_{01}QF_{01} = 0,
\end{equation}
for a holomorphic Poisson structure $(I_0, \sigma_0)$ with
$\mathrm{Re}(\sigma_0)=Q$, then it determines a second holomorphic
Poisson structure $(I_1,\sigma_1)$ with $\mathrm{Re}(\sigma_1)=Q$, via
$I_1 = I_0 + QF_{01}$.  If we then have another closed 2-form
$F_{12}$, such that
\begin{equation}\label{jk}
  F_{12}I_1 + I_1^* F_{12} + F_{12}QF_{12} = 0,
\end{equation}
then it determines a third holomorphic Poisson structure
$(I_2,\sigma_2)$ with $\mathrm{Re}(\sigma_2)=Q$, via $I_2 = I_1 +
QF_{12}$.  Rewriting~\eqref{ij} and~\eqref{jk} as the pair
\begin{equation*}
  F_{01}I_0+I_1^* F_{01} =0,\hspace{6em}F_{12}I_1  +I_2^* F_{12} = 0,
\end{equation*}
we see that the closed 2-form $F_{02} = F_{01} + F_{12}$ satisfies
\begin{align*}
  F_{02}I_0 + I_0^*F_{02} + F_{02}QF_{02} &=F_{02}I_0  + I^*_2F_{02} \\
  &=F_{01}(I_2-I_1) - (I_1^* - I_0^*) F_{12}\\
  &= F_{01}QF_{12} - F_{01}QF_{12} =0.
\end{align*}
We may interpret this in the following way: a solution to~\eqref{ij}
defines a degenerate bi-Hermitian structure with constitutent complex
structures $(I_0, I_1)$, and a solution to~\eqref{jk} does the same,
but with complex structures $(I_1, I_2)$.  These two degenerate
bi-Hermitian structures may be composed in the sense that the sum
$F_{02}= F_{01} + F_{12}$ defines a new degenerate bi-Hermitian
structure with constituent complex structures $(I_0, I_2)$.  This
composition may be viewed as a groupoid (see Figure~\ref{grpd}).
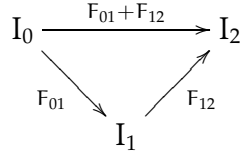
\begin{figure}
  \begin{equation*}
    \xymatrix{I_0\ar[dr]_{F_{01}}\ar[rr]^{F_{01}+F_{12}} & & I_2\\
      &I_1\ar[ru]_{F_{12}} & }
  \end{equation*}\caption{The composition of solutions to~\eqref{ij}, \eqref{jk}.}
  \label{grpd}
\end{figure}
\begin{defn}[\cite{Gualtieri:2007fe}]\label{groupoid} Fix a real
  manifold $M$ with real Poisson structure $Q$.  Then we may define a
  groupoid whose objects are holomorphic Poisson structures $(I_i,
  \sigma_i)$ on $M$ with $\mathrm{Re}(\sigma_i)=Q$ and whose morphisms
  $\Hom(i,j)$ are real closed 2-forms $F_{ij}$ such that the following
  two equations hold.
  \begin{align*}
    I_j - I_i = QF_{ij}\\
    F_{ij} I_j+ I_i^* F_{ij} = 0.
  \end{align*}
  The composition of morphisms is then simply addition of 2-forms
  $F_{ij} + F_{jk}$.
\end{defn}
\begin{remark}
  Combined with Theorem~\ref{brGK}, this definition provides a
  composition operation for the degenerate bi-Hermitian structures
  determined by the 2-forms $F_{ij}$.
\end{remark}

\section{Flow construction}\label{flowsec}
We now review a method, introduced in~\cite{MR2371181} and developed
in~\cite{Gualtieri:2007fe}, for modifying a bi-Hermitian structure of the kind studied in the previous section using
a smooth real-valued function.  The method proceeds essentially by
solving~\eqref{jk} using the flow of a suitably-chosen vector field,
and then composing this solution with the given bi-Hermitian structure
viewed as a solution to~\eqref{ij}.  This is a direct analog of the
well-known modification of a K\"ahler form by adding $f$ to the
K\"ahler potential.

\begin{theorem}[\cite{MR2371181,Gualtieri:2007fe}]\label{flow}
  Let $(I_0,\sigma_0)$ be a holomorphic Poisson structure with
  $Q=\mathrm{Re}(\sigma_0)$, and let $f$ be a smooth real-valued
  function.  Let $\varphi_t$ be the time-$t$ flow of the Hamiltonian
  vector field $X = Q(df)$.  Then, so far as the flow is well-defined,
  the closed 2-form
  \begin{equation}\label{efte}
    F_t = \int_0^t \varphi_s^*(dd^c_{I_0} f) ds
  \end{equation}
  satisfies Equation~\ref{brane}, i.e.
  \begin{equation*}
    F_t I_0 + I_0^*F_t + F_t Q F_t = 0.
  \end{equation*}
\end{theorem}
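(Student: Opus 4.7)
My plan is to set $I_t := I_0 + QF_t$ and $E(t) := F_tI_0 + I_0^*F_t + F_tQF_t$, so that the theorem is the assertion $E(t)\equiv 0$. Since $F_0 = 0$ one has $E(0) = 0$, so it suffices to show $\dot E(t) = 0$. Differentiating in $t$ and using the identity $(QF_t)^* = F_tQ$ (a consequence of $F_t^* = -F_t$ and $Q^* = -Q$, where $*$ denotes transpose), the four terms of $\dot E$ regroup as
\[
\dot E(t) = \dot F_t\, I_t + I_t^*\, \dot F_t,\qquad \dot F_t = \varphi_t^*(dd^c_{I_0} f).
\]

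The heart of the argument is then the identification $I_t = \varphi_t^* I_0$. Granted this, one rewrites
\[
\dot E(t) = \varphi_t^*\bigl((dd^c_{I_0} f)I_0 + I_0^*(dd^c_{I_0} f)\bigr) = 0,
\]
the right-hand side vanishing because $dd^c_{I_0} f$ is a real $(1,1)$-form with respect to $I_0$, and pure type $(1,1)$ for a $2$-form $G$ is equivalent to the algebraic relation $GI_0 + I_0^*G = 0$. Combined with $E(0) = 0$, this proves the theorem.

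The identification $I_t = \varphi_t^* I_0$ is itself established by the same ODE method: both sides agree at $t=0$, and differentiating yields $Q\varphi_t^*(dd^c_{I_0}f)$ on one side and $\varphi_t^*(\mathcal{L}_XI_0)$ on the other. Since $X = Q(df)$ is Hamiltonian for the Poisson tensor $Q$, we have $\mathcal{L}_XQ = 0$ and hence $\varphi_t^*Q = Q$, reducing the equality to the pointwise, $t$-independent Lie-derivative identity
\[
\mathcal{L}_X I_0 = Q(dd^c_{I_0} f).
\]
This last identity is the main technical obstacle, and I would verify it by a local computation in holomorphic coordinates: writing $\sigma_0 = \tfrac{1}{2}\sigma^{ab}\partial_a\wedge\partial_b$ with $\sigma^{ab}$ holomorphic, one computes $X^a = \tfrac{1}{2}\sigma^{ab}\partial_b f$, so that the only nonzero components of $\mathcal{L}_XI_0$ are $(\mathcal{L}_XI_0)^a{}_{\bar b} = 2i\,\partial_{\bar b}X^a = i\,\sigma^{ac}\partial_c\partial_{\bar b}f$ together with its complex conjugate; on the other hand $dd^c_{I_0}f = 2i\,\partial\bar\partial f$, and contracting with $Q$ produces precisely the same $(1,0)\otimes(0,1)^*$ and $(0,1)\otimes(1,0)^*$ blocks. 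The holomorphicity of $\sigma_0$ is essential here, as it allows $\partial_{\bar b}$ to commute past $\sigma^{ab}$; everything else in the argument is formal bookkeeping around this one geometric identity.
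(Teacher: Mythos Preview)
The paper does not actually supply a proof of this theorem; it is quoted from the references \cite{MR2371181,Gualtieri:2007fe} and followed only by a remark recording the consequence $I_t=\varphi_t(I_0)$. So there is no in-paper argument to compare against.

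That said, your reconstruction is the standard one and is correct. The reduction $\dot E(t)=\dot F_t I_t + I_t^*\dot F_t$ is exactly right (using $(QF_t)^*=F_tQ$), and the crux is indeed the identification of $I_t$ with the transport of $I_0$ by the Hamiltonian flow, which in turn rests on the pointwise identity $\mathcal{L}_X I_0 = Q\,dd^c_{I_0}f$. Your coordinate verification of that identity is fine; holomorphicity of $\sigma^{ab}$ is precisely what makes $\partial_{\bar b}$ commute past it. Note that the paper's remark writes $I_t=\varphi_t(I_0)$ (pushforward) rather than $\varphi_t^*I_0$; this is a matter of which sign convention one adopts for $Q(df)$, and with the convention you state ($X^a=\tfrac12\sigma^{ab}\partial_b f$) your pullback version is consistent. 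Either way, $\dot F_t$ is of type $(1,1)$ for $I_t$, which is all that the vanishing of $\dot E$ requires.
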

\begin{remark} The above flow generates a family of integrable complex
  structures $I_t = I_0 + QF_t$, which are all equivalent, since
  $I_t=\varphi_t(I_0)$. If $f$ is strictly plurisubharmonic for $I_0$,
  i.e. defines a Riemannian metric $h=-(dd^c_{I_0} f)I_0$, then
  from~\eqref{efte} we have
  \begin{equation*}
    \lim_{t\to 0} t^{-1}F_t = dd^c_{I_0} f,
  \end{equation*}
  implying that the symmetric tensor
  \begin{equation*}
    g_t = -\tfrac{1}{2}F_t(I_0 + I_t)
  \end{equation*}
  satisfies $\lim_{t\to 0} t^{-1}g_t = h$, so that $g_t$ defines a
  Riemannian metric for sufficiently small $t\neq 0$, and so by
  Theorem~\ref{flow}, we obtain a generalized K\"ahler structure
  $(g_t, I_0, I_t, b_t)$.
\end{remark}

\section{Generalized K\"ahler blow-up}\label{GKblp}

We now apply the machinery of the preceding sections to the problem of
blowing up the generalized K\"ahler 4-manifold $(M,\JJ_+,\JJ_-)$
introduced in Section~\ref{intro} at a nondegenerate point $p\in D_+$
in the complex locus of $\JJ_+$.  The first step (\S~\ref{gkblowup})
is to blow up the generalized complex structure $\JJ_+$ and obtain a
degenerate bi-Hermitian structure.  In the second step
(\S~\ref{gkblowup2}) we deform the degenerate bi-Hermitian structure
by composing it with another degenerate bi-Hermitian structure
obtained from the flow construction (\S~\ref{flowsec}).  Finally
(\S~\ref{pos}), we prove that the resulting deformation is
positive-definite, defining a generalized K\"ahler structure on the
blow-up.

\subsection{Simultaneous blow-up}\label{gkblowup}

\begin{lemma}\label{holnorm}
  In a neighbourhood of the nondegenerate point $p\in D_+$, there
  exist complex coordinates $(u_\pm, v_\pm)$ such that the holomorphic
  Poisson structure $(I_\pm,\sigma_\pm)$ is given by
  $u_\pm\del_{u_\pm}\wedge\del_{v_\pm}$.
\end{lemma}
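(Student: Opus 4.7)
The strategy is to establish, separately for each sign, a local normal form for a holomorphic Poisson bivector on a complex surface that has a first-order zero along a smooth complex curve through $p$. Once such preliminary data is in hand, achieving the form $u\,\del_u\wedge\del_v$ amounts to straightening the Hamiltonian flow of $u$ via a single integration in the transverse direction.

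First I would verify that both $\sigma_+$ and $\sigma_-$ vanish to first order along the smooth complex curve $D_+$ near $p$. For $\sigma_+$ this is immediate from the local model $\rho_+ = w + dw\wedge dz$ of Section~\ref{intro}, which exhibits $\sigma_+ = w\,\del_w\wedge\del_z$. For $\sigma_-$ I would use the identity $\mathrm{Re}(\sigma_-) = Q = \mathrm{Re}(\sigma_+)$ from~\eqref{cmmt}: writing $\sigma_- = f\,\del_u\wedge\del_v$ in local $I_-$-holomorphic coordinates, the holomorphic function $f$ vanishes to the same order as $\mathrm{Re}(\sigma_-)$, hence to first order along $D_+$. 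That $D_+$ is itself $I_-$-holomorphic is automatic: since $I_+ = \pm I_-$ on $D_+$, the endomorphism $I_-$ preserves the tangent space of the $I_+$-holomorphic curve $D_+$.

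With this in place, I would choose $I_\pm$-holomorphic coordinates $(u,v)$ centered at $p$ with $D_+ = \{u=0\}$, so that $\sigma_\pm = u\,g(u,v)\,\del_u\wedge\del_v$ with $g(0,0)\neq 0$. Since $1/g$ is holomorphic near the origin, define
\begin{equation*}
u_\pm = u, \qquad v_\pm(u,v) = \int_0^v g(u,w)^{-1}\,dw.
\end{equation*}
The derivative $\del_v v_\pm(0,0) = 1/g(0,0)\neq 0$ ensures that $(u_\pm,v_\pm)$ is a valid holomorphic chart. Substituting $\del_u = \del_{u_\pm} + (\del_u v_\pm)\del_{v_\pm}$ and $\del_v = g^{-1}\del_{v_\pm}$ yields $\del_u\wedge\del_v = g^{-1}\del_{u_\pm}\wedge\del_{v_\pm}$, so that $\sigma_\pm = u_\pm\,\del_{u_\pm}\wedge\del_{v_\pm}$, as required.

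The principal subtlety, though mild, lies in the first step: the nondegeneracy hypothesis is phrased in terms of $\JJ_+$ and must be transferred to $\sigma_-$, which is holomorphic for the different complex structure $I_-$. Once the first-order vanishing is established for both signs, the remainder is the holomorphic analogue of straightening a nonvanishing Hamiltonian vector field to $\del_v$.
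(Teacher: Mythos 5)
Your coordinate-straightening step (the substitution $v_\pm=\int_0^v g(u,w)^{-1}\,dw$, with Jacobian determinant $g^{-1}$ absorbing the unit $g$) and the transfer of vanishing order from the real bivector $Q$ to the holomorphic coefficient $f$ are both correct, and in fact supply detail that the paper's proof leaves implicit. The gap is in your very first step, for $\sigma_+$: the coordinates $(w,z)$ of the normal form~\eqref{standard} are holomorphic for an auxiliary complex structure adapted to the generalized complex structure $\JJ_+$, \emph{not} for $I_+$, and the bivector $w\,\del_w\wedge\del_z$ appearing in Section~\ref{blowp} is the anticanonical section of $\JJ_+$ (whose \emph{imaginary} part is the real Poisson structure $P_+$), not the holomorphic Poisson structure $\sigma_+$ of $(M,I_+)$ coming from the bi-Hermitian data (whose \emph{real} part is $Q$). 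These are genuinely different objects; the entire content of Lemma~\ref{wed} is to compare the two coordinate systems, which would be vacuous if they coincided. So the assertion ``$\sigma_+=w\,\del_w\wedge\del_z$'' is unjustified --- for the $+$ sign it essentially assumes the lemma --- and since your argument for $\sigma_-$ bootstraps off the first-order vanishing of $Q=\mathrm{Re}(\sigma_+)$, the whole chain is left hanging.

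The repair is short and is exactly the paper's route. The normal form~\eqref{standard} does give, coordinate-independently, that $P_+=\mathrm{Im}(w\,\del_w\wedge\del_z)$ vanishes linearly along $D_+$, with its anticanonical section having a transverse zero at $p$. Combining~\eqref{cmmt}, \eqref{ident} and~\eqref{realpois} yields $Q=\tfrac12\,P_+\,g\,P_-$; since $g$ is invertible everywhere and $P_-$ is invertible near $p$ (because $D_-$ is disjoint from $D_+$), $Q$ vanishes to exactly first order along $D_+$, nondegenerately at $p$. From that point your real-part argument applies verbatim to \emph{both} $\sigma_+$ and $\sigma_-$ (there is no asymmetry between the two signs once one works through $Q$ rather than through the $\JJ_+$ normal form), and the remainder of your proof --- $D_+$ being $I_\pm$-holomorphic because $I_+=I_-$ along it, followed by the straightening of $g$ --- goes through.
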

\begin{proof}
  From the normal form for $\JJ_+$ near $p$ given by
  Equation~\ref{standard}, it follows that $P_+$ is isomorphic to
  $\mathrm{Im}(w\del_w\wedge\del_z)$.  In particular, $P_+$ vanishes
  linearly along $D_+$.  By Equations~\ref{cmmt}, \ref{ident}, and
  \ref{realpois}, and since $D_-$ is disjoint from $D_+$, it follows
  that $Q=-\tfrac{1}{2}[I_+, I_-] g^{-1}$ has linear vanishing along
  $D_-$ as well. This means that the holomorphic Poisson structure
  $\sigma_\pm$ is a section of a holomorphic line bundle $\wedge^2
  T_{1,0}$ with a nondegenerate zero at $p$.
  Hence we may choose $I_\pm$-complex coordinates $(u_\pm,v_\pm)$ near
  $p$ such that $\sigma_\pm = u_\pm\del_{u_\pm}\wedge\del_{v_\pm}$, as
  required.
\end{proof}

We now demonstrate that the coordinates $(u_\pm, v_\pm)$ placing
$\sigma_\pm$ into standard form are closely related to the coordinates
$(w,z)$ placing $\JJ_+$ into the standard form~\ref{standard}.

\begin{lemma}\label{wed}
  In a sufficiently small neighbourhood $U$ of $p$ where the following
  coordinates are defined, the functions $u_\pm, v_\pm$ lie in the
  ideal of $C^\infty(U,\CC)$ generated by $w,z$.
\end{lemma}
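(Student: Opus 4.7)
By Malgrange's theorem (as invoked in the proof of Theorem~\ref{indep}), showing $u_\pm, v_\pm \in (w,z)$ reduces to verifying $u_\pm(p)=v_\pm(p)=0$, which is immediate, and that all pure partial derivatives of $u_\pm, v_\pm$ with respect to $\ol w, \ol z$ vanish at $p$. The plan is to derive this infinite-order Taylor condition at $p$ by combining the $I_\pm$-holomorphicity of $u_\pm, v_\pm$ with an identification of complex structures along $D_+$, where $p$ lies.

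The key structural input is that on all of $D_+$, both bi-Hermitian complex structures $I_+$ and $I_-$ coincide with the complex structure $J_w$ defined by $(w,z)$. Indeed, $P_+$ vanishes on $D_+$, so by \eqref{realpois} one has $I_+=I_-$ on $D_+$, and then \eqref{reconstruct} shows that $\JJ_+|_{D_+}$ is, modulo a $B$-field, the complex generalized complex structure of $I_+|_{D_+}$. On the other hand, since $\rho_+|_{D_+}=dw\wedge dz|_{D_+}$, the same $\JJ_+|_{D_+}$ corresponds to the complex structure $J_w|_{D_+}$, giving $I_\pm|_{D_+}=J_w|_{D_+}$ as endomorphisms of $TM|_{D_+}$. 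In particular, the zero loci $\{u_\pm=0\}$ both coincide with $D_+=\{w=0\}$ in a neighbourhood of $p$.

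With this, I would express $T^{0,1}_{I_\pm}$ near $p$ as the graph of a smooth endomorphism from $T^{0,1}_{J_w}$ to $T^{1,0}_{J_w}$: a basis is $\del_{\ol w}+A\del_w+B\del_z$ and $\del_{\ol z}+C\del_w+D\del_z$, where the graph functions $A,B,C,D$ vanish not merely at $p$ but along all of $D_+$. The $I_\pm$-holomorphicity of $u_\pm$ then yields the PDE system
\[
\del_{\ol w} u_\pm = -A\,\del_w u_\pm - B\,\del_z u_\pm,\qquad \del_{\ol z} u_\pm = -C\,\del_w u_\pm - D\,\del_z u_\pm,
\]
and similarly for $v_\pm$. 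The vanishing $u_\pm|_{D_+}=0$ additionally forces the tangential derivatives $\del_z u_\pm|_{D_+}$ and $\del_{\ol z} u_\pm|_{D_+}$ to vanish, while $\del_w u_\pm|_{D_+}\neq 0$ records the transverse simple zero.

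The Malgrange condition would then be established by induction on the total order of pure $\ol w, \ol z$ derivatives at $p$. The base case follows from $A(p)=B(p)=C(p)=D(p)=0$. The inductive step comes from differentiating the PDE system and evaluating at $p$: the Leibniz expansion involves derivatives of $A,B,C,D$ at $p$, whose behaviour is constrained by the integrability of $I_\pm$ (Newlander--Nirenberg applied to the graph functions) together with their vanishing along $D_+$. The argument for $v_\pm$ is parallel, after adjusting $v_\pm$ by an $I_\pm$-holomorphic change of coordinate (preserving the standard form of $\sigma_\pm$) so that $v_\pm|_{D_+}=z|_{D_+}$. The main obstacle is the inductive bookkeeping: integrability of $I_\pm$ alone is not quite enough to force flatness of the graph functions in the $\ol w$-direction at $p$, and one must additionally invoke the generalized K\"ahler compatibility \eqref{gkcon} (tying derivatives of $I_\pm$ to those of $b$ near $D_+$) in order to close the induction.
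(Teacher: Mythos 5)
Your reduction to Malgrange's criterion is the right first move, but the argument you build on top of it has a genuine gap --- one you have in fact located yourself in your final sentence. The structural input you extract, namely that $I_\pm$ agrees with the coordinate complex structure $J_w$ along $D_+$ so that the graph functions $A,B,C,D$ vanish on $\{w=0\}$, controls only the \emph{first-order} jet of $I_\pm-J_w$ at $p$. Malgrange's criterion demands that \emph{all} pure $\ol w,\ol z$ derivatives of $u_\pm,v_\pm$ vanish at $p$, and already the second-order condition $\del_{\ol w}^2u_\pm(p)=0$ requires $\del_{\ol w}A(p)=0$ (since $\del_w u_\pm(p)\neq 0$), which does not follow from $A|_{D_+}=0$: a smooth function vanishing on the real codimension-two set $\{w=0\}$ can look like $\ol w\cdot(\text{unit})$, and $\ol w$ is not in the ideal $(w,z)$. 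Neither integrability of $I_\pm$ nor the compatibility \eqref{gkcon} is shown to supply the required infinite-order flatness of the graph functions in the antiholomorphic directions, so the induction does not close. Note also that the conclusion is in fact much stronger than anything "vanishing along $D_+$'' can deliver: the lemma holds because $u_\pm$ turns out to be a \emph{unit multiple} of $w$.

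The paper's route avoids all of this by working at the level of pure spinors rather than endomorphisms. The wedge-product identities of \cite{Gualtieri:2010fk} give the exact algebraic relations
\[
e^{\ol\beta}(w-dw\wedge dz)=e^{\lambda_-}(u_-+du_-\wedge dv_-),\qquad
e^{\beta}(w-dw\wedge dz)=e^{\lambda_+}(u_++du_+\wedge dv_+),
\]
where $\rho_-=e^\beta$ generates the canonical bundle of $\JJ_-$. These have precisely the shape of equation~\eqref{autexp}, so the computation already carried out in the proof of Theorem~\ref{indep} applies verbatim: the degree-zero component gives $u_\pm=e^{-\lambda_\pm}w$, and equating the $dw\wedge d\ol z$ and $dw\wedge d\ol w$ components of the degree-two part expresses $\del v_\pm/\del\ol z$ and $\del v_\pm/\del\ol w$ as $w$ times smooth functions, whence every pure antiholomorphic derivative of $v_\pm$ at the origin vanishes. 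To salvage your approach you would need to prove that $I_\pm-J_w$ is flat to infinite order in the $\ol w$-direction at $p$, which is essentially equivalent to the lemma itself; the spinor identity is doing irreplaceable work here.
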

\begin{proof}
  Let $\rho_+$ be the generator~\eqref{standard} defined by $\JJ_+$ in
  $U$, and let $\rho_- = e^\beta$ be the generator defined by $\JJ_-$,
  which has symplectic type in $U$, so that $\beta = B + i\omega$ is a
  complex 2-form such that $\omega$ is symplectic.

  The holomorphic Poisson structures $\sigma_\pm=u_\pm
  \del_{u_\pm}\wedge \del_{v_\pm}$ define generalized complex
  structures in $U$ via the differential forms
  \[
  u_\pm + du_\pm\wedge dv_\pm \in e^{\sigma_\pm} \Omega^{2,0}_\pm.
  \]
  In~\cite{Gualtieri:2010fk}, it is shown that these holomorphic
  Poisson structures may be expressed as a certain ``wedge product''
  of the underlying generalized complex structures $(\JJ_+,\JJ_-)$.
  Explicitly, this provides the following identities\footnote{In
    general, if $\rho_\pm$ generate the canonical line bundles of
    $\JJ_\pm$, then $\rho_+^\top\wedge \rho_-$ generates
    $e^{\sigma_+}\Omega^{n,0}(M,I_+)$ and $\rho_+^\top\wedge
    \ol\rho_-$ generates $e^{\sigma_-}\Omega^{n,0}(M,I_-)$. Here $\rho^\top$ is the reversal anti-automorphism of forms. }:
  \begin{align*}
    e^{\ol\beta} (w - dw\wedge dz) &= e^{\lambda_-}(u_- + du_-\wedge dv_-)\\
    e^{ \beta} (w - dw\wedge dz) &= e^{\lambda_+}(u_+ + du_+\wedge
    dv_+),
  \end{align*}
  for smooth functions $\lambda_+, \lambda_-\in
  C^\infty(U,\CC)$. Comparing these equations to~\eqref{autexp}, we
  see that the argument in the proof of Theorem~\ref{indep} implies
  the required constraint on $u_\pm, v_\pm$.
\end{proof}

\begin{theorem}\label{liftcx}
  The complex structures $I_-, I_+$ underlying a generalized K\"ahler
  4-manifold $(M,\JJ_+,\JJ_-)$ both lift to the blow-up of $(M,
  \JJ_+)$ at a nondegenerate complex point $p\in D_+$ .
\end{theorem}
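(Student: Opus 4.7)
The plan is to lift each $I_\pm$ by identifying $\tilde M$ as a smooth manifold with the ordinary complex-analytic blow-up of $M$ at $p$ in the $I_\pm$-holomorphic coordinates $(u_\pm, v_\pm)$ supplied by Lemma~\ref{holnorm}. I treat the two signs in parallel; the argument is identical.

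The first step is to form $\hat U_\pm$, the classical complex-analytic blow-up of a neighbourhood $U$ of $p$ in the $I_\pm$-holomorphic coordinates $(u_\pm, v_\pm)$. This is a complex manifold whose complex structure $\hat I_\pm$ automatically extends $I_\pm$ smoothly across the exceptional divisor.

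The second step is to identify $\hat U_\pm$ as a smooth manifold with the open set $\tilde U \subset \tilde M$ built in Section~\ref{blowp} using the $(w, z)$-coordinates of $\JJ_+$. By Lemma~\ref{wed}, the functions $u_\pm, v_\pm$ lie in the ideal $(w, z) \subset C^\infty(U, \CC)$, so the Malgrange argument from the proof of Theorem~\ref{indep} lifts the coordinate transition $(w, z) \mapsto (u_\pm, v_\pm)$ to a smooth map $\Psi_\pm \colon \tilde U \to \hat U_\pm$. A symmetric application of the same argument --- after rewriting the identities $e^\beta(w - dw \wedge dz) = e^{\lambda_+}(u_+ + du_+ \wedge dv_+)$ and $e^{\bar\beta}(w - dw \wedge dz) = e^{\lambda_-}(u_- + du_- \wedge dv_-)$ from the proof of Lemma~\ref{wed} so that $(u_\pm, v_\pm)$ plays the source role --- gives $w, z \in (u_\pm, v_\pm) \subset C^\infty(U, \CC)$, so the inverse of $\Psi_\pm$ is also smooth. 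Both lifts restrict to the identity on $U \setminus \{p\}$, so by continuity they are mutually inverse diffeomorphisms. Pulling $\hat I_\pm$ back along $\Psi_\pm$ yields a smooth complex structure $\tilde I_\pm$ on $\tilde U$ extending $I_\pm$, which glues with $I_\pm$ on $\tilde M \setminus E$ to produce the desired global lift.

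The main obstacle is verifying the symmetric Malgrange condition $w, z \in (u_\pm, v_\pm)$ needed to make $\Psi_\pm$ a diffeomorphism rather than merely a smooth map; this is essentially an implicit function theorem for Malgrange's ideal. The defining identities invert algebraically --- their degree-zero parts give $w = e^{\lambda_\pm} u_\pm$ and their degree-two parts force the linear part of $v_\pm$ to lie in the $\CC$-span of $dw|_p, dz|_p$, so that $(u_\pm, v_\pm)$ has nondegenerate $\CC$-linear leading term in $(w, z)$ --- and the $\bar\partial$-derivative verification in the proof of Theorem~\ref{indep} then transfers by swapping the roles of source and target. Thus the symmetric condition should hold without substantive new input, and the lift argument carries through.
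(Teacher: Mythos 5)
Your proof is correct and follows essentially the same route as the paper: identify the blow-up of $U$ in the $(u_\pm,v_\pm)$-chart with the blow-up in the $(w,z)$-chart by lifting the coordinate change, using Lemma~\ref{wed} together with the Malgrange-ideal criterion from the proof of Theorem~\ref{indep}. The only difference is that you explicitly verify the symmetric containment $w,z\in(u_\pm,v_\pm)$ so that the lift is a diffeomorphism rather than merely smooth --- a point the paper leaves implicit --- and your argument for it (invertibility of the complex-linear leading coefficients read off from the degree-zero and degree-two parts of the defining identities) is sound.
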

\begin{proof}
  Let $\psi:U\to\CC^2$ be the chart defined by $(w,z)$ in the normal
  form~\eqref{standard} and let $\varphi_\pm:U\to\CC^2$ be the chart
  defined by $(u_\pm,v_\pm)$ in the normal form given by
  Lemma~\ref{holnorm} .  Then $\chi_\pm=\psi\circ\varphi^{-1}_\pm$ is
  a diffeomorphism and $\chi_\pm(0)=0$.  The complex structure $I_\pm$
  lifts to the blow-up $\til M$ precisely when the diffeomorphism
  $\chi_\pm$ lifts to a diffeomorphism of blow-ups
  $\til\chi_\pm:[\varphi_\pm(U):0]\to [\psi(U):0]$.  This occurs if
  and only if $u_\pm$ and $v_\pm$ are contained in the ideal generated
  by $w, z$, which is itself guaranteed by Lemma~\ref{wed}.
\end{proof}
\begin{remark}
  It follows from the theorem that the complex structure $\til I_\pm$
  we obtain on the blow-up of $(M,\JJ_+)$ may be identified with the
  usual complex blow-up of $(M,I_\pm)$ at $p$. Furthermore, since the
  holomorphic Poisson structure $\sigma_\pm$ vanishes at $p$, it
  follows that $\sigma_\pm$ lifts to a holomorphic Poisson structure
  on the blow-up.
\end{remark}

We now apply Theorem~\ref{liftcx} to obtain a degenerate bi-Hermitian
structure on the blow-up of $(M,\JJ_+, \JJ_-)$ at $p\in D_+$.  Let
$(g,I_+, I_-, b)$ be the bi-Hermitian structure on $M$ defined by the
generalized K\"ahler structure.
\begin{corollary}\label{degfir}
  Let $(\til M, \til \JJ_+)$ be the blow-up of the generalized complex
  4-manifold $( M, \JJ_+)$ at the nondegenerate point $p\in D_+$, with
  blow-down map $\pi$.  Then $\til M$ inherits a degenerate
  bi-Hermitian structure $(\til g, \til b, \til I_+, \til I_-)$ such
  that $\pi:(\til M, \til I_\pm)\rightarrow (M,I_\pm)$ is a usual
  holomorphic blow-down and $\til g + \til b = \pi^*(g+b)$.
\end{corollary}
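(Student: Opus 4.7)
The plan is to define $\til g := \pi^* g$ and $\til b := \pi^* b$, take $\til I_\pm$ to be the lifts furnished by Theorem~\ref{liftcx}, and verify that the defining conditions of a degenerate bi-Hermitian structure all transfer under pullback. The identity $\til g + \til b = \pi^*(g+b)$ is then automatic, and $\pi:(\til M, \til I_\pm)\to (M, I_\pm)$ is the usual holomorphic blow-down by construction, so nothing more needs to be said about the complex-analytic side of the claim.

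The crucial input is that, by Theorem~\ref{liftcx}, $\pi$ is holomorphic for both pairs $(\til I_\pm, I_\pm)$, so $\pi_*\circ\til I_\pm = I_\pm\circ\pi_*$ and, dually, $\pi^*$ intertwines $I_\pm^*$ with $\til I_\pm^*$ on differential forms. I would verify the compatibility $\til g\til I_\pm + \til I_\pm^*\til g = 0$ by evaluating both sides on vectors in $T\til M$, pushing forward via $\pi_*$, and invoking the corresponding identity $gI_\pm + I_\pm^*g = 0$ on $M$. A direct computation then gives $\til\omega_\pm := \til g \til I_\pm = \pi^*\omega_\pm$. Since $\pi^*$ commutes with $d$ and, by holomorphicity, with each $d^c_{I_\pm}$, the generalized K\"ahler identity $d^c_{I_+}\omega_+ = -d^c_{I_-}\omega_- = db$ on $M$ pulls back to
\[
d^c_{\til I_+}\til\omega_+ = -d^c_{\til I_-}\til\omega_- = d\til b
\]
on $\til M$, establishing the required condition.

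The only substantive remark is that $\til g$ is genuinely degenerate along the exceptional divisor $E = \pi^{-1}(p)$: tangent vectors to $E$ lie in the kernel of $\pi_*$, so $\til g$ annihilates $TE$ at every point of $E$. This is precisely why we obtain only a \emph{degenerate} bi-Hermitian structure, and it motivates the deformation argument carried out in Sections~\ref{gkblowup2} and~\ref{pos}. No real obstacle arises at this stage, since Theorem~\ref{liftcx} has already performed the essential work of simultaneously lifting both complex structures; the present corollary is a direct functorial consequence.
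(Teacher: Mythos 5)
Your proposal is correct and is exactly the argument the paper intends: the corollary is stated without proof immediately after Theorem~\ref{liftcx}, the implicit content being precisely that one sets $\til g = \pi^*g$, $\til b = \pi^*b$, takes the lifted integrable complex structures $\til I_\pm$, and observes that the algebraic compatibility and the $d^c$-identities pull back because $\pi$ is holomorphic for both pairs. Your added remark that $\til g$ annihilates $TE$ (since $\pi_*$ kills $TE$) correctly identifies why the structure is only degenerate and why the subsequent deformation is needed.
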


\subsection{Deformation of degenerate bi-Hermitian
  structure}\label{gkblowup2}

The degenerate bi-Hermitian structure on $\til M$ obtained in
Corollary~\ref{degfir} fails to define a generalized K\"ahler
structure because $\til g$ is not positive-definite along the
exceptional divisor $E$.  We now apply Theorem~\ref{flow} to
obtain\footnote{The flow construction may be applied equally well to
  degenerate bi-Hermitian structures.} a second degenerate
bi-Hermitian structure, which we use to modify $(\til g, \til b, \til
I_+, \til I_-)$.  The modification will leave the structures on $\til
M$ unchanged outside a tubular neighbourhood $V_E$ of $E$ which blows
down to a neighbourhood of $p$ in which $\JJ_-$ has symplectic type
and is given by a complex 2-form with imaginary part $\omega$.  Let
$\pi:\til M\to M$ denote the blow-down map, and write $\til \omega =
\pi^*\omega$ for the pull-back of the symplectic form to $V_E$.

First we describe the degenerate bi-Hermitian structure using the
formalism of Theorem~\ref{brGK}.  The complex structure $\til I_-$ and
the 2-form $\til\omega$ satisfy~\eqref{brane}, and so in $V_E$ we have
\begin{equation*}
  \til I_+ = \til I_- + \til Q\til \omega,
\end{equation*}
where $\til Q=\mathrm{Re}(\til \sigma_-)=\mathrm{Re}(\til \sigma_+)$,
as in~\eqref{cmmt}, and $\til\sigma_\pm$ is the blown up holomorphic
Poisson structure.  In the following, we construct a closed 2-form
$F_t$ in a possibly smaller tubular neighbourhood such that
\[
\til I_+^t = \til I_+ + \til QF_t
\]
defines a new complex structure $\til I^t_+$.  The final task,
completed in Section~\ref{pos}, will be to show that the
composition~\eqref{com}, in the sense of Definition~\ref{groupoid},
defines a generalized K\"ahler structure.
\begin{equation}\label{com}
  \xymatrix{\til I_-\ar[r]^{\til \omega}&\til I_+\ar[r]^{F_t}& \til I^t_+}
\end{equation}

We now construct $F_t$.  Let $(u,v)$ be $I_+$-holomorphic coordinates
near $p$ such that $\sigma_+ = u\del_u\wedge \del_v$, and let
$(u_0,v_0)= (u/v,v)$ and $(u_1,v_1)=(u,v/u)$ be the two affine charts
covering a tubular neighbourhood $V_E$ of the exceptional divisor
$E=u_1^{-1}(0)\cup v_0^{-1}(0)$.  Using $u_0, v_1$ as affine
coordinates on $E\cong\CC P^1$, we may describe the Fubini-Study
metric $\omega_E$ in terms of the K\"ahler potential
\[
f_0 = \log (\tfrac{u_0\ol u_0}{1 + u_0\ol u_0}) = \log
(\tfrac{1}{1+v_1\ol v_1}),
\]
which is smooth away from $u_0=0$ and satisfies $i\del\delbar f_0 =
\omega_E$.  Although $f_0$ is singular, we observe that its
Hamiltonian vector field is smooth:
\begin{align*}
  Q(df_0) &= \mathrm{Re}(u_0 \del_{u_0}\wedge \del_{v_0})d\log (\tfrac{u_0\ol u_0}{1 + u_0\ol u_0})\\
  &=\tfrac{1}{1+ u_0\ol u_0}\mathrm{Re}(\del_{v_0}).
\end{align*}
Hence $Q(df_0)$ defines a smooth Poisson vector field on $V_E$.

Now choose a bump function $\eps\in C^\infty(V_E,[0,1])$ which
vanishes on a smaller tubular neighbourhood $U_E\subset V_E$ and is
such that $1-\eps$ has compact support in a closed disc bundle $K$
over E, with $U_E\subset K\subset V_E$.  Consider the smooth function
$f_\eps\in C^\infty(V_E,\RR)$ given by
\[
f_\eps = \eps\log(u\ol u + v\ol v) = \eps\log(v_0\ol v_0(1 + u_0\ol
u_0)).
\]
Since $i\del\delbar\log(v_0\ol v_0(1+u_0\ol u_0)) =
i\del\delbar\log(1+u_0\ol u_0) = -i\del\delbar f_0$, it follows that
\begin{equation}\label{sinfun}
  f = c(f_0 + f_\eps),\ \ c\in \RR_{>0}
\end{equation}
has the property that $X = Q(df)$ is a smooth Poisson vector field in
$V_E$ and
\begin{equation}\label{cpctef}
  i\del\delbar f = \begin{cases}
    c\omega_E & \text{ in } U_E\\
    0 & \text{ outside } K
  \end{cases}
\end{equation}

For sufficiently small $\delta>0$, there exists an open neighbourhood
$V_E'$, with $K\subset V_E'\subset V_E$, on which the flow $\varphi_t$
of $X$ is well-defined for all $t\in(-\delta,\delta)$.  Also, choose
$\delta$ small enough so that there is a neighbourhood $V_E''$ with
$\ol{V_E''}\subset V_E'$, with $\varphi_t(K)\subset V_E''$ for
$t\in(-\delta,\delta)$.  Using~\eqref{cpctef}, we see that
$\varphi_t^*(i\del\delbar f)$ is smooth on $V_E'$, with compact
support contained in $V_E''$, for all $t\in(-\delta,\delta)$.

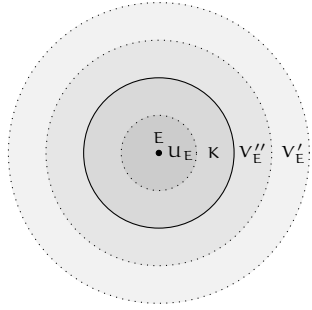
\begin{figure}\begin{center}\begin{tikzpicture}
      \filldraw [gray!10] (0,0) circle (2); \draw [dotted] (0,0)
      circle (2); \filldraw [gray!20] (0,0) circle (1.5); \draw
      [dotted] (0,0) circle (1.5); \filldraw [gray!30] (0,0) circle
      (1); \draw (0,0) circle (1); \filldraw [gray!40] (0,0) circle
      (.5); \draw [dotted] (0,0) circle (.5); \draw (0,.2) node
      {\tiny{$E$}}; \filldraw [black] (0,0) circle (1pt); \draw (.3,0)
      node {\tiny $U_E$}; \draw (.7,0) node {\tiny{ $K$}}; \draw
      (1.25,0) node {\tiny $V''_E$}; \draw (1.8,0) node {\tiny
        $V'_E$};
    \end{tikzpicture}\caption{Normal cross-section of neighbourhoods
      of $E$ contained in $V_E$}\end{center}\end{figure}

We now apply Theorem~\ref{flow}\footnote{The fact that $f$ is not
  smooth does not affect the validity of Theorem~\ref{flow} in this
  case, as the vector field $X=Q(df)$ is a smooth Poisson vector field, and hence locally Hamiltonian.} to the flow $\varphi_t$ on $V_E'$.
This provides a solution
\[
F_t = \int_0^t \varphi_s^*(dd^c_{\til I_+} f) ds
\]
to Equation~\ref{brane} for all $t\in(-\delta,\delta)$, with compact
support in $V'_E$.  Therefore, we obtain a family of complex
structures on $V_E'$ given by
\begin{equation}\label{defo}
  \til I_+^t = \til I_+ + Q F_t.
\end{equation}
Since $F_t$ has compact support contained in $V_E'$, the complex
structure $\til I_+^t$ may be extended to all of $\til M$ by setting
it equal to $\til I_+$ outside $V_E'$.  We summarize the above
procedure in the following result.
\begin{prop}
  The flow construction of Theorem~\ref{flow}, applied to the singular
  function $f$ given in~\eqref{sinfun}, produces a smooth family of
  solutions $(F_t)_{t\in(-\delta,\delta)}$ to~\eqref{brane} with
  compact support in a tubular neighbourhood of the exceptional
  divisor, and hence we obtain a degenerate bi-Hermitian structure
  \[
  (\til g'_t, \til b'_t, \til I_+^t, \til I_+)
  \]
  on $\til M$, where $\til I_+^t$ is given by~\eqref{defo} and $\til
  g'_t, \til b'_t$ are as in Theorem~\ref{brGK}, yielding
  \begin{equation}\label{defg}
    \til g'_t = -\tfrac{1}{2}F_t(\til I_+ + \til I_+^t).
  \end{equation}
\end{prop}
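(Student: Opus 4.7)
The proof essentially packages the preparations made in the paragraphs preceding the statement, together with Theorems \ref{flow} and \ref{brGK}. The key task is to verify that the flow construction goes through cleanly despite the singularity of $f$, and then to feed the output into Theorem \ref{brGK}.

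First, I would verify that $F_t$ as defined by \eqref{efte} is a smooth family of closed 2-forms with compact support in $V_E'$. Although $f$ itself is singular along the proper transform of $\{u=0\}$, the form $dd^c_{\til I_+} f$ is both smooth and compactly supported on $V_E$: smoothness follows because the singular contribution $\log u_0 + \log\bar u_0$ to $f_0$ is pluriharmonic (so that $i\del\delbar f_0 = \omega_E$ globally and smoothly), while $f_\eps$ is itself globally smooth by design of the cut-off $\eps$; compact support follows from \eqref{cpctef}, together with the fact that $K$ is a closed disc bundle over the compact exceptional divisor $E\cong\CC P^1$. Because $X=Q(df)$ was shown in the preceding paragraph to be a smooth Poisson vector field on $V_E$, the time-$s$ flow $\varphi_s$ is a well-defined diffeomorphism of $V_E'$ for $|s|<\delta$, and by the choice of $\delta$ the pullback $\varphi_s^*(dd^c_{\til I_+} f)$ has compact support inside $V_E''$ for every such $s$. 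Integrating in $s$ then produces the desired smooth family $F_t$ with compact support in $V_E''$.

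Next, I would apply Theorem \ref{flow} to $(\til I_+, \til\sigma_+)$ to conclude that $F_t$ satisfies \eqref{brane}. The singularity of $f$ is not an obstruction here, since the proof of Theorem \ref{flow} only requires that the vector field $X=Q(df)$ be smooth (equivalently, locally Hamiltonian), as noted in the footnote attached to the proposition. Since $F_t$ has compact support inside $V_E''\subset V_E'$, extending by zero produces a global closed 2-form on $\til M$ that continues to satisfy \eqref{brane} (trivially outside its support).

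Finally, I would invoke Theorem \ref{brGK} with input $(I_0, F) = (\til I_+, F_t)$. This immediately yields the integrable complex structure $\til I_+^t = \til I_+ + Q F_t$ of \eqref{defo}, the symmetric tensor $\til g'_t = -\tfrac{1}{2}F_t(\til I_+ + \til I_+^t)$ of \eqref{defg}, a 2-form $\til b'_t = -\tfrac{1}{2}F_t(\til I_+^t - \til I_+)$, and the integrability identity $d^c_{\til I_+}\omega_{\til I_+} = -d^c_{\til I_+^t}\omega_{\til I_+^t} = d\til b'_t$. Since Theorem \ref{brGK} explicitly permits the symmetric tensor $g$ to fail positive-definiteness, this yields precisely the claimed degenerate bi-Hermitian structure $(\til g'_t, \til b'_t, \til I_+^t, \til I_+)$ on $\til M$; positive-definiteness of $\til g'_t$ is postponed to Section \ref{pos}. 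The only substantive obstacle in the argument is ensuring that the singular $f$ gives rise to smooth derivative data, and this is handled by the pluriharmonicity of the singular part of $f_0$ together with the cut-off design built into \eqref{sinfun}.
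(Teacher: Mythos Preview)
Your proposal is correct and follows essentially the same approach as the paper. In fact, the paper presents this proposition explicitly as a summary of the preceding construction (``We summarize the above procedure in the following result''), so there is no separate proof to compare against; your write-up simply makes explicit the logical flow already contained in the paragraphs before the statement, invoking Theorem~\ref{flow} (with the footnoted caveat about the smoothness of $X=Q(df)$) and Theorem~\ref{brGK} exactly as intended.
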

In Section~\ref{pos}, we compose the above degenerate bi-Hermitian
structure with that from Corollary~\ref{degfir} and show the resulting
structure is positive-definite.

\begin{remark}\label{zed}The family
  of complex structures $\til I_+^t$ on $\til M$ constructed above
  defines a deformation of the blow-up complex structure $\til I_+$ in
  the direction given by the class in $H^1( \mathcal{T})$ defined by
  the vector field $Z=Q(df)$, which is a holomorphic vector field on
  the annular neighbourhood of $E$ defined by $V_E\backslash K$.  The
  $(1,0)$ part of $Z$ in this annular neighbourhood is (in the
  $(u_0,v_0)$ chart)
  \begin{align*}
    Z^{1,0} &= c\til \sigma_+ ( d(\log(\tfrac{u_0\ol u_0}{1+u_0\ol u_0}) + \log(v_0\ol v_0(1+u_0\ol u_0)))\notag\\
    &= c (u_0\del_{u_0}\wedge\del_{v_0})(u_0^{-1}du_0 + v_0^{-1}dv_0)\notag\\
    &=c(\del_{v_0} - \tfrac{u_0}{v_0}\del_{u_0} ).
  \end{align*}
  This deformation class has a geometric interpretation: since $p\in
  D_+$ and $\sigma_+|_{D_+}=0$, the contraction
  \[
  \Tr(d\sigma_+|_{D_+})
  \]
  defines a holomorphic vector field $\chi$ on $D_+$.  The flow of
  $c\chi$ then provides a path $p(t)$ of points on $D_+$.  The family
  of blow-ups of $(M,I_+)$ at $p(t)$ provides a deformation of complex
  structure with derivative $[Z^{(1,0)}]$ at $t=0$.
\end{remark}

\subsection{Positivity}\label{pos} 
Now that we have constructed the two degenerate bi-Hermitian
structures on $\til M$ occurring in~\eqref{com}, we must argue that
their composition in the sense of Definition~\ref{groupoid} is
positive-definite.  The composition is the (a priori degenerate)
bi-Hermitian structure $(\til g_t, \til b_t, \til I_-, \til I_+^t)$,
where
\begin{align*}
  \til g_t &= -\tfrac{1}{2}(\til\omega + F_t)(\til I_- + \til I_+^t)\\
  \til b_t&= -\tfrac{1}{2}(\til \omega + F_t)(-\til I_- + \til I_+^t).
\end{align*}
Rewriting this, we obtain
\begin{align}\label{three}
  \til g_t &= -\tfrac{1}{2}\left(\til\omega(\til I_- + \til I_+)
    +\til\omega(\til I_+^t - \til I_+) +
    F_t(\til I_- - \til I_+) + F_t(\til I_+ + \til I_+^t)\right) \notag\\
  &=\til g + \til g'_t-\tfrac{1}{2}(\til\omega\til QF_t - F_t \til Q
  \til\omega),
\end{align}
where we use the fact that $\til I_+ - \til I_- = \til Q\til\omega$
and $\til I_+^t - \til I_+ = \til QF_t$.
\begin{theorem}\label{finalresult}
  Provided that $c$ in~\eqref{sinfun} is chosen small enough, the
  symmetric tensor $\til g_t$ defined by~\eqref{three} is
  positive-definite on $\til M$ for sufficiently small $t\neq 0$,
  defining a generalized K\"ahler structure on the blow-up.
\end{theorem}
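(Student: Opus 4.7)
The plan is to split $\til M$ into three regions and verify that $\til g_t$ is positive definite on each for sufficiently small $c,t>0$, then glue. Since $F_t$ is compactly supported in $V_E'$, on $\til M\setminus V_E'$ we have $\til g_t=\til g=\pi^*g$, which is positive definite because $\pi$ restricts to a diffeomorphism there. The substantive work lies in $V_E'$, which I would handle in two stages: a pointwise analysis at each $e\in E$, extended to a uniform neighbourhood $W\supset E$ by compactness, together with a perturbation estimate on the compact intermediate region $\ol{V_E'}\setminus W$.

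The key computation is along $E$. Since $d\pi$ has kernel $T_eE$, the pullback $\til g=\pi^*g$ restricts to zero on $T_eE$ while being positive on the normal $N_e$; the same vanishing holds for $\til\omega=\pi^*\omega$, so $\til\omega(V,Y)=0$ whenever $Y\in T_eE$. I would use this to show that the commutator term $-\tfrac12(\til\omega\til QF_t-F_t\til Q\til\omega)$ in~\eqref{three} vanishes identically on $T_eE\times T_eE$: the first summand vanishes because its second slot is in $T_eE$, and the second vanishes because $\til\omega X=0$ already kills the input to $F_t\til Q$. On $T_eE\times T_eE$ we are therefore left with $\til g_t=\til g'_t$. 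Since $E\subset U_E$ where $f=cf_0$ and $dd^c_{\til I_+}f=2c\,\omega_E$ is the Fubini--Study form, Theorem~\ref{flow} combined with $\til g'_t=-\tfrac12 F_t(\til I_++\til I_+^t)$ yields the leading behaviour
\[
\til g_t|_{T_eE}\ =\ 2ct\cdot g_E|_{T_eE}\ +\ O((ct)^2),
\]
where $g_E$ is the positive-definite Fubini--Study metric on $E$. The cross-block $\til g_t(X,N)$ for $X\in T_eE$ and $N\in N_e$ is $O(ct)$, while the normal block is $\til g|_{N_e}+O(ct)$, positive with a uniform lower bound. A Schur-complement estimate then shows $\til g_t$ is positive definite at $e$ for $ct>0$ sufficiently small; by compactness of $E$ and continuity, this extends to positive definiteness on an open tubular neighbourhood $W\supset E$, uniformly in $e$.

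On the compact region $\ol{V_E'}\setminus W$, $\til g$ admits a uniform positive lower bound, while the full perturbation $\til g_t-\til g=\til g'_t-\tfrac12(\til\omega\til QF_t-F_t\til Q\til\omega)$ is $O(ct)$ in $C^0$, since $F_t$, $\til Q$, $\til\omega$, and the $\til I_\pm$ are all bounded on this compact set. For $ct$ small enough, $\til g_t$ therefore remains positive definite there. Combining the three regions gives $\til g_t>0$ on all of $\til M$ for $c$ fixed sufficiently small and $t\neq 0$ sufficiently small. By the composition rule of Definition~\ref{groupoid} applied to~\eqref{com} and Theorem~\ref{brGK}, the data $(\til g_t,\til b_t,\til I_-,\til I_+^t)$ then satisfy all integrability and algebraic constraints of a genuine bi-Hermitian pair, defining a generalized K\"ahler structure on $\til M$ via~\eqref{reconstruct}.

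The main obstacle is the pointwise analysis along $E$: one must identify the correct positive leading term coming from $\til g'_t$ and simultaneously confirm that the commutator term in~\eqref{three} does not cancel it anywhere on $E$, including the transverse intersection point $E\cap\til D_+$ where $\til Q$ remains nonzero but $\til I_+=\til I_-$. The cancellation rests entirely on the vanishing of $\til\omega$ on $TE$, and ensuring that the resulting Schur bound is uniform across the compact $\CC P^1$ is precisely what lets the compactness step go through. Once this leading-order positivity is secured, the remainder reduces to a bookkeeping exercise matching orders in $c$ and $t$.
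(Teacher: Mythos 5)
Your proof follows essentially the same strategy as the paper's: decompose $\til M$ into a region where $F_t$ vanishes, a neighbourhood of $E$, and a compact intermediate annulus; obtain positivity near $E$ from the Fubini--Study leading term of $\til g'_t$ on $TE$ together with $\til g=\pi^*g$ on the normal directions and the vanishing of the commutator term on $TE\times TE$; and treat the annulus as a small perturbation of the positive-definite $\til g$. Two technical differences are worth noting. First, on the intermediate region the paper invokes the hypothesis that $c$ is small (observing that the second and third summands of~\eqref{three} scale with $c$ while $\til g$ does not), whereas you argue directly that the perturbation is $O(t)$ in $C^0$ on a fixed compact set; your route is legitimate and in fact suggests the $c$-smallness is only needed where the paper uses it, near $E$ versus the annulus being organized slightly differently.

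Second, and more importantly, the one place your argument is thinner than it should be is precisely the crux: the passage from pointwise positivity at each $e\in E$ to positivity on a tubular neighbourhood $W$ that is \emph{independent of $t$}. ``Positive-definiteness is open'' gives, for each fixed small $t$, some neighbourhood $W_t$ of $E$; nothing in ``compactness and continuity'' alone prevents $W_t$ from shrinking as $t\to 0$, and if it does, the complementary region $\ol{V_E'}\setminus W$ is no longer a fixed compact set on which $\til g$ has a uniform lower bound. Indeed, if at a point at distance $r$ from $E$ you run the Schur estimate in the frame $T_eE\oplus N_e$, the cross-terms of $\til g$ are only $O(r)$, and the Schur condition becomes $ct\gtrsim r^2$, i.e.\ a $t$-dependent neighbourhood. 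To get a $t$-independent $W$ you must exploit that $\til g=\pi^*g$ is positive semidefinite everywhere and work in its eigenframe (so that its small and large blocks decouple with no $O(r)$ cross-terms), and that the dangerous block of the commutator term is $O(ctr)$ rather than merely $O(ct)$ near $E$. With that refinement your Schur argument closes; without it, the final gluing step does not. (The paper asserts the existence of such a fixed $U$ rather tersely as well, so you are in good company, but since you chose to make the block analysis explicit you should carry it through at nearby points, not only on $E$ itself.) A small factual slip in your closing commentary: at $E\cap\til D_+$ the Poisson tensor $\til Q$ \emph{vanishes} (that intersection lies on $\til D_+$, which is contained in the zero locus of $Q$), which only makes the commutator term easier to control there.
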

\begin{proof}
  Since $F_t\to 0$ as $t\to 0$, it follows that $\til I_+^t\to \til
  I_+$ as $t\to 0$.  By Equation~\ref{defg}, therefore, we see that
  \[
  \lim_{t\to 0} \tfrac{1}{t} \til g'_t = - (dd^c_{\til I_+} f)(\til
  I_+) =\begin{cases}c\omega_E &
    \text{ in } U_E\\
    0 & \text{ outside } K
  \end{cases}
  \]
  where $\omega_E$ is the Fubini-Study metric.  This implies that
  $\til g_t'$ is positive-definite when restricted to $TE$ for
  sufficiently small nonzero $t$, and hence $\til g + \til g'_t$ is
  positive-definite in a neighbourhood of $E$ for sufficiently small
  nonzero $t$.  Also, the third summand in~\eqref{three} is
  proportional to $\tilde \omega$, which vanishes along $E$.

  Fix $c=c_0\in\RR_{>0}$ in the definition~\eqref{sinfun} of $f$, and
  let $U\subset U_E$ be a tubular neighbourhood of $E$ where the third
  summand in~\eqref{three} is so small that $\til g_t$ is
  positive-definite in $U$ for sufficiently small nonzero $t$.  Note
  that $\til g_t$ is certainly positive-definite outside $K$ (where it
  coincides with $\til g$), hence it remains to show that $\til g_t$
  is positive in the intermediate region $K\backslash U$.

  We have chosen $U$ so that the third term in~\eqref{three} is
  dominated there by the first two terms.  This means that at each
  point in $U$ and for each vector $v\neq 0$ (and for suficiently
  small nonzero $t$), we have
  \begin{align}\label{compa}
    |\til Q(F_tv,\til \omega v)| &< \til g(v,v) +\til g'_t(v,v)\notag\\
    &=\til g(v,v)-\tfrac{1}{2}F_t((\til I_+ + \til I_+^t)v,v) \notag\\
    &= \til g(v,v) - F_t(\til I_+v,v) -\tfrac{1}{2}\til Q(F_tv,F_tv)\notag \\
    &=\til g(v,v) - F_t(\til I_+v,v).
  \end{align}
  Since~\eqref{compa} holds for $c=c_0$, it will also hold in $U$ for
  $c = \lambda c_0$, for any $\lambda\in (0,1)$, since for $x,
  y\in\RR_{\geq 0}$ and $z\in\RR$, we have the implication
  \begin{align*}
    \left( x < y + z \right)\Rightarrow \left(\lambda x < \lambda
      ({y+z}) \leq y + \lambda z\right).
  \end{align*}
  Therefore we have shown positivity of $\til g_t$ in $U$ for any
  $0<c\leq c_0$, for sufficiently small nonzero $t$.

  Now observe that the first term of~\eqref{three}, i.e. $\til g$, is
  positive-definite on $K\backslash U$ and independent of $c$, whereas
  the second and third terms are each proportional to $c$.  Hence by
  choosing $c\neq 0$ sufficiently small, we ensure that $\til g_t$ is
  positive-definite on $K\backslash U$, in addition to $U$ and outside
  $K$, for sufficiently small $t\neq 0$.  This completes the proof.
\end{proof}

\section{Examples}

By the work of Goto~\cite{Goto:2009ix}, we know that the choice of a
holomorphic Poisson structure on a compact K\"ahler manifold gives
rise to a family of generalized K\"ahler structures deforming the
initial K\"ahler structure.  In this way, one obtains nontrivial
generalized K\"ahler structures on any compact K\"ahler surface with
effective anti-canonical divisor $D$.  Performing a K\"ahler blow-up
of such a surface at a point lying on $D$, we obtain a new K\"ahler
surface with effective anti-canonical divisor given by the proper
transform of $D$.  Hence we may apply the Goto deformation and obtain
a generalized K\"ahler structure on the blow-up.  We believe that our
construction gives an explicit realization of Goto's existence result
in this case, as evidenced by Remark~\ref{zed}.

In the non-algebraic case, or for noncompact surfaces, our
construction provides new generalized K\"ahler structures.  For
example, a result of Apostolov~\cite{MR1869698} states that for
surfaces with odd first Betti number, a bi-Hermitian structure which
is not strongly bi-Hermitian may only exist on blow-ups of minimal
class VII surfaces with curves.  If the minimal surface has a
generalized K\"ahler structure, therefore, we may employ our result to
obtain structures on the appropriate blow-ups.

\begin{example}[Diagonal Hopf surfaces]
  $X=S^3\times S^1$ admits a family of generalized K\"ahler structures
  with bi-Hermitian structure $(g, I_+,I_-)$ given by viewing $X$ as a
  Lie group, taking $g$ to be a bi-invariant metric, and $(I_+, I_-)$
  to be left and right-invariant complex structures compatible with
  $g$ (see~\cite{Gualtieri:2010fk} for details). In these examples,
  $D_+$ and $D_-$ are nonempty disjoint curves which sum to the
  anti-canonical divisor.  We may therefore blow up any number of
  points lying on $D_+\cup D_-$ and obtain generalized K\"ahler
  structures on these manifolds, which are diffeomorphic to
  $(S^3\times S^1)\cs k \ol{\CC P^2}$.  This provides another
  construction of bi-Hermitian structures on non-minimal Hopf
  surfaces, besides those discovered in~\cite{Pontecorvo, MR1096177}.
\end{example}

In a remarkable recent work~\cite{MR2719408}, Fujiki and Pontecorvo
obtained bi-Hermitian structures on hyperbolic and parabolic Inoue
surfaces as well as Hopf surfaces, by carefully studying the twistor
space of the underlying conformal 4-manifold. They then obtained
bi-Hermitian structures when these surfaces are properly blown up,
meaning that the surface is blown up at nodal singularities of the
anti-canonical divisor.  Finally, they obtained bi-Hermitian
structures on a family of deformations of such blowups.  We may of
course blow up their minimal examples at smooth points of the
anti-canonical divisor, using our procedure. It remains to determine
how the various bi-Hermitian structures now known on $(S^3\times
S^1)\cs k\ol{\CC P^2}$ are related.

\bibliographystyle{hyperamsplain} \bibliography{GCG}

\providecommand{\bysame}{\leavevmode\hbox to3em{\hrulefill}\thinspace}
\providecommand{\MR}{\relax\ifhmode\unskip\space\fi MR }
\providecommand{\MRhref}[2]{%
  \href{http://www.ams.org/mathscinet-getitem?mr=#1}{#2}
}
\providecommand{\href}[2]{#2}
\begin{thebibliography}{10}

\bibitem{MR1869698}
V.~Apostolov, \emph{Bihermitian surfaces with odd first {B}etti number},
  \href{http://dx.doi.org/10.1007/s002090100266}{Math. Z. \textbf{238} (2001)},
  no.~3, 555--568.

\bibitem{Cavalcanti:2008uk}
G.~R. Cavalcanti and M.~Gualtieri, \emph{Blow-up of generalized complex
  4-manifolds}, Journal of Topology \textbf{2} (2009), 840--864,
  \href{http://arxiv.org/abs/arXiv:0806.0872v1}{{\tt arXiv:0806.0872v1}}.

\bibitem{MR2719408}
A.~Fujiki and M.~Pontecorvo, \emph{Anti-self-dual bihermitian structures on
  {I}noue surfaces}, J. Differential Geom. \textbf{85} (2010), no.~1, 15--71.

\bibitem{Goto:2009ix}
R.~Goto, \emph{Deformations of Generalized {K}\"ahler Structures and
  Bihermitian Structures}, \href{http://arxiv.org/abs/arXiv:0910.1651v1}{{\tt
  arXiv:0910.1651v1}}.

\bibitem{Gualtieri:2007fe}
M.~Gualtieri, \emph{Branes on Poisson varieties}, The Many Facets of Geometry:
  A Tribute to Nigel Hitchin (J.-P. Bourguignon, O.~Garcia-Prada, and
  S.~Salamon, eds.), Oxford University Press, July 2010.
  \href{http://arxiv.org/abs/arXiv:0710.2719v2}{{\tt arXiv:0710.2719v2}}.

\bibitem{Gualtieri:2010fk}
\bysame, \emph{Generalized Kahler geometry},
  \href{http://arxiv.org/abs/arXiv:1007.3485v1}{{\tt arXiv:1007.3485v1}}.

\bibitem{MR2217300}
N.~Hitchin, \emph{Instantons, {P}oisson structures and generalized {K}{\"a}hler
  geometry}, \href{http://dx.doi.org/10.1007/s00220-006-1530-y}{Comm. Math.
  Phys. \textbf{265} (2006)}, no.~1, 131--164,
  \href{http://arxiv.org/abs/arXiv:math/0503432v1}{{\tt arXiv:math/0503432v1}}.

\bibitem{MR2371181}
\bysame, \emph{Bihermitian metrics on del {P}ezzo surfaces}, J. Symplectic
  Geom. \textbf{5} (2007), no.~1, 1--8,
  \href{http://arxiv.org/abs/arXiv:math/0608213v1}{{\tt arXiv:math/0608213v1}}.

\bibitem{MR1096177}
C.~LeBrun, \emph{Anti-self-dual {H}ermitian metrics on blown-up {H}opf
  surfaces}, \href{http://dx.doi.org/10.1007/BF01446578}{Math. Ann.
  \textbf{289} (1991)}, no.~3, 383--392.

\bibitem{MR0212575}
B.~Malgrange, \emph{Ideals of differentiable functions}, Tata Institute of
  Fundamental Research Studies in Mathematics, No. 3, Tata Institute of
  Fundamental Research, Bombay, 1967.

\bibitem{Pontecorvo}
M.~Pontecorvo, \emph{Complex structures on {R}iemannian four-manifolds},
  \href{http://dx.doi.org/10.1007/s002080050108}{Math. Ann. \textbf{309}
  (1997)}, 159--177.

\end{thebibliography}

\end{document}